\documentclass[12pt,english]{amsart}
\usepackage[cp1251]{inputenc}
\usepackage[english]{babel}
\usepackage{amsmath,amsthm,amssymb}
\textwidth=170mm
\textheight=250mm

\setlength{\oddsidemargin}{0pt}
\setlength{\evensidemargin}{0cm}
\setlength{\topmargin}{-35pt}

\newtheorem{theorem}{Theorem}[section]
\newtheorem{lemma}[theorem]{Lemma}
\newtheorem{corollary}[theorem]{Corollary}
\newtheorem{remark}[theorem]{Remark}

\theoremstyle{definition}

\numberwithin{equation}{section}

\begin{document}

\title[]
{Bounds for the total variation distance between
second degree polynomials in normal random variables}

\author{Egor Kosov}

\maketitle

\begin{abstract}
In this paper we study
bounds for the total variation
distance between two second degree polynomials in
normal random variables provided that
they essentially depend on at least three variables.
\end{abstract}

\noindent
Keywords: distribution of a polynomial, total variation distance, Gaussian measure

\noindent
AMS Subject Classification: 60E05, 	60E15, 60B10, 28C20

\section{Introduction}

Let $Z=(Z_1,\ldots, Z_n)$ be the standard normal random vector in $\mathbb{R}^n$,
i.e. $Z_k$ --- are i.i.d normal random variables with the distribution $\mathcal{N}(0, 1)$.
We consider two polynomials $f$ and $g$ of degree at most $d$ in $n$ variables
and we are interested in the bounds for the total variation distance
$d_{\rm TV}\bigl(f(Z), g(Z)\bigr)$ between the random variables $f(Z)$ and $g(Z)$,
where
the total variation distance is defined as follows:
$$
d_{\rm TV}\bigl(f(Z), g(Z)\bigr) := \sup\biggl\{
\mathbb{E}\bigl[\varphi\bigl(f(Z)\bigr)\bigr] - \mathbb{E}\bigl[\varphi\bigl(g(Z)\bigr)\bigr],
\ \varphi\in C_0^\infty(\mathbb{R}),\ \|\varphi\|_\infty\le1\biggr\},
$$
where $\|\varphi\|_\infty:=\sup\limits_{t\in \mathbb{R}}|\varphi(t)|$.

The Davydov--Martynova
bound (see \cite{DavM})
asserts that, for each $d$ and for each non-constant polynomial $g$ of degree at most $d$,
there is a number
$C(d, g)$ such that for each polynomial $f$ of degree at most $d$ one has
\begin{equation}\label{DM-eq}
d_{\rm TV}\bigl(f(Z), g(Z)\bigr)\le C(d, g) \|f(Z)-g(Z)\|_2^{1/d},
\end{equation}
where we use the notation $\|f(Z)-g(Z)\|_2:=\bigl(\mathbb{E}|f(Z)-g(Z)|^2\bigr)^{1/2}$
for short.
In \cite{DavM} the bound was stated only for Hermite polynomials of a fixed
degree (for elements of a fixed Wiener chaos) but the estimate is actually valid for any
polynomials of a fixed degree (see also \cite{NP}, \cite{NNP},
\cite{Bret}, \cite{BKZ}, \cite{BKZ-DAN}, \cite{Zel}
for further development of the stated inequality,
where the case of random vectors with polynomial components has also been studied,
and see also two survey papers \cite{B16} and \cite{B19}).
In \cite{Kos-IMRN} (see also \cite{Kos-Dan}) the dependence of $C(d, g)$ on $g$ was clarified:
one can take $C(d, g)=C(d)\bigl(\mathbb{D}[g(Z)]\bigr)^{-\frac{1}{2d}}$,
where $\mathbb{D}[g(Z)]$ is the variance of the random variable $g(Z)$.
Moreover, in that paper the Davydov--Martynova inequality \eqref{DM-eq} was generalized
to the case when the random vector $Z$ has an arbitrary logarithmically concave
distribution (e.g. $Z$ has a uniform distribution on some convex body,
see \cite{Bor} or
\cite[Section~3.10(vi)]{MeasTheor} and \cite[Section~4.3]{DiffMeas}).
In recent years similar bounds have been studied in more general settings
of general sufficiently smooth functions $f$ and $g$
(e.g. for functions from Sobolev classes) and general spaces and distributions of random element $Z$
(see \cite{BC}, \cite{BC17}, \cite{BC19}, \cite{BCP}, \cite{Kos-FCAA}).
We point out that bounds of the Davydov--Martynova type have found their applications in statistics
(e.g. see \cite{DEKN}, \cite{EV}, \cite{OV}) as such bounds
provide a rate of convergence in total variation for a
sequence of random variables convergent in a weaker distance.

In \cite{NP}, along with the Davydov--Martynova-type inequality \eqref{DM-eq},
I. Nourdin and G. Poly studied a similar bound of the following type:
$$
d_{\rm TV}\bigl(f(Z), g(Z)\bigr)\le C\bigl(d, \mathbb{D}[f(Z)], \mathbb{D}[g(Z)]\bigr)\cdot
 \bigl(d_{\rm KR}(f(Z), g(Z))\bigr)^{\theta(d)},
$$
where $d_{\rm KR}$ is the Kantorovich--Rubinstein distance between distributions.
The best up-to-date result asserts that one can take the exponent $\theta(d)=\frac{1}{d+1}$
(see \cite{Kos-JMAA}). An interesting question is whether one can take the exponent
$\theta(d)=\frac{1}{d}$ like in estimate \eqref{DM-eq}.
The proof in \cite{Kos-JMAA}
shows that bounds for the total variation distance are connected with the smoothness
of the distributions of random variables $f(Z)$ and $g(Z)$
(see also \cite{BKZ}, \cite{BZ}, \cite{Kos-Sb}, \cite{Kos-DAN}, \cite{BKP-DAN}, and \cite{BKP-MMJ},
where the role of fractional regularity was studied and generalized).
It is known (e.g. see \cite[Corollary~6.9.9]{Gaus}) that the distribution of
quadratic forms and general second degree polynomials
becomes smoother
with the growth of the number
of variables they are essentially depend on.
Thus, it is natural to study possible improvements
in the
Davydov--Martynova-type inequality
when we deal with the second degree polynomials
that depend essentially on sufficiently many variables.
Note that every second degree polynomial $g$ can be represented in the form
$g(x)=\langle Bx, x\rangle+\langle b, x\rangle + \beta$
for some self adjoint linear operator $B$, for some vector $b\in \mathbb{R}^n$,
and for some $\beta\in \mathbb{R}$. For a self adjoint linear operator $B$,
we let $\Lambda(B):=\{\lambda_1, \lambda_2,\ldots \}$ to be the set of all eigenvalues
of $B$, counting multiplicities, and we always assume that the eigenvalues are enumerated
such that $|\lambda_1|\ge|\lambda_2|\ge|\lambda_3|\ge\ldots$.
In \cite{Zin} the following assertion was proved:
if $g(x) = \langle Bx, x\rangle-{\rm tr}B$
and if the cardinality of the set $\{\lambda\in \Lambda(B)\colon \lambda\ne 0\}$
is at least $5$, then there is a number $C(g)$ such that, for any
$f(x)=\langle Ax, x\rangle - {\rm tr}A$ (with self adjoint $A$),
one has
\begin{equation}\label{Zin-eq}
d_{\rm TV}\bigl(f(Z), g(Z)\bigr)\le C(g) \|f(Z)-g(Z)\|_2.
\end{equation}
In the present note we continue the study of the total variation distance bounds
for polynomials in Gaussian random variables and
are interested in possible improvements
of \eqref{Zin-eq}.
In particular, we show (see Corollary \ref{cor2}) that,
for polynomials $f(x)=\langle Ax, x\rangle+\langle a, x\rangle + \alpha$ and
$g(x)=\langle Bx, x\rangle+\langle b, x\rangle + \beta$ (with self adjoint $A$ and $B$),
one has
\begin{equation}\label{Kos-eq}
d_{\rm TV}\bigl(f(Z), g(Z)\bigr)
\le \frac{80}{\sqrt{|s_1|\cdot|s_2|}}\|f(Z)-g(Z)\|_2
\end{equation}
provided that the cardinality of
the set $\{\lambda\in \Lambda(B)\colon \lambda\ne 0\}$
is at least $3$,
where $s_1$ and $s_2$ are any eigenvalues of the same sign
(such eigenvalues exist under this assumption).
In particular, under the same assumption,
$$
d_{\rm TV}\bigl(f(Z), g(Z)\bigr)
\le \frac{80}{\sqrt{|\lambda_2|\cdot|\lambda_3|}}\|f(Z)-g(Z)\|_2.
$$
We note that this bound is optimal in the following sense:
if $g(x_1, x_2) = x_1^2-x_2^2$ (i.e. only two eigenvalues are non-zero),
then the bound of type \eqref{Zin-eq} is not true.
Indeed, if the bound \eqref{Zin-eq} were valid for this $g$, then one could take
$f=g+h$, $h\in \mathbb{R}$ and get the bound
$$
d_{\rm TV}\bigl(g(Z)+h, g(Z)\bigr)\le C\cdot h.
$$
This estimate implies that $g(Z)$ has a bounded density which is known not to be the case.
The proof of \eqref{Kos-eq} follows the ideas developed
in \cite{BKP-IZV} and \cite{BKP-homog-DAN},
where regularity of densities of
strictly convex functions in normal random variables was studied.

In the present paper
we also study connections with and possible generalizations
of the recent results
concerning the
distributions of Euclidean norms of normal random vectors
(see \cite{GNSU}, \cite{NSU}, \cite{BNU}).
This distributions
have been extensively studied
due to their broad applications in statistics.
In particular, in \cite{GNSU} the following assertion was proved.
Let $X$ and $Y$ be any normal random vectors in $\mathbb{R}^n$ with
covariance matrixes $\Sigma_X$ and $\Sigma_Y$ respectively and let $a\in \mathbb{R}^n$.
Let
$\Lambda_{kX}^2:=\sum\limits_{j=k}^{\infty}\lambda_{jX}^2$,
where $\lambda_{jX}$ are the eigenvalues of the covariance matrix $\Sigma_X$ of the
random vector $X$, counting multiplicities and
arranged in the non-increasing order, and let $\Lambda_{kY}$
be defined in the same manner for the random vector $Y$.
Then
\begin{equation}\label{GNSU-eq}
d_{\rm Kol}(|X|, |Y-a|)\le
C\Bigl(\frac{1}{\sqrt{\Lambda_{1X}\Lambda_{2X}}}+\frac{1}{\sqrt{\Lambda_{1Y}\Lambda_{2Y}}}\Bigr)
\Bigl(\|\Sigma_X - \Sigma_Y\|_{(1)}+|a|^2\Bigr)
\end{equation}
for some numerical constant $C>0$
where $\|\cdot\|_{(1)}$ is the nuclear norm of a matrix and
where $d_{\rm Kol}$ is the Kolmogorov distance between random variables i.e.
$$
d_{\rm Kol}(\xi, \eta):=\sup\limits_{t\in \mathbb{R}}\bigl|P(\xi\le t) - P(\eta\le t)\bigr|.
$$
As a corollary one also has (see \cite[Corollary~2.3]{GNSU})
\begin{equation}\label{GNSU-eq-2}
d_{\rm Kol}(|X-a|, |Y-a|)\le
C\Bigl(\frac{1}{\sqrt{\Lambda_{1X}\Lambda_{2X}}}+\frac{1}{\sqrt{\Lambda_{1Y}\Lambda_{2Y}}}\Bigr)
\Bigl(\|\Sigma_X - \Sigma_Y\|_{(1)}+|a|^2\Bigr).
\end{equation}
In this note we provide some
generalizations of the bounds
\eqref{GNSU-eq} and \eqref{GNSU-eq-2}
replacing the Kolmogorov distance with the total variation
distance but at the cost of a worse constant
and a differen right hand side of the estimate.
In particular, we prove (see Corollary \ref{cor3}) that
\begin{multline}\label{Kos-eq-2}
d_{\rm TV}\bigl(|X-a|, |Y-b|\bigr)
\\
\le
\frac{160}{\sqrt{\lambda_{1X}\cdot\lambda_{2X}}}
\bigl(\|\Sigma_X-\Sigma_Y\|_{HS}+
|{\rm tr}\Sigma_X-{\rm tr}\Sigma_Y|+\bigl||a|^2-|b|^2\bigr|+
|\Sigma_X^{1/2}a - \Sigma_Y^{1/2}b|\bigr).
\end{multline}
Let us compare the obtained bound with estimates \eqref{GNSU-eq}
and \eqref{GNSU-eq-2}.
First of all we note that the constant
$\frac{1}{\sqrt{\lambda_{1X}\cdot\lambda_{2X}}}$
is worse (and, often, significantly) compared to
$\frac{1}{\sqrt{\Lambda_{1X}\Lambda_{2X}}}$.
It would be interesting to understand
if the constant $\frac{1}{\sqrt{\lambda_{1X}\cdot\lambda_{2X}}}$
could be replaced with $\frac{1}{\sqrt{\Lambda_{1X}\Lambda_{2X}}}$
in the bound for the total variation distance.
Now, let us compare 
the right hand side in the obtained bound when $b=0$
with $\|\Sigma_X - \Sigma_Y\|_{(1)}+|a|^2$ from \eqref{GNSU-eq}.
On the one hand, when the shift $a\ne 0$, 
the right hand side in \eqref{Kos-eq-2} provides
worse decay in $a$ when $a\to 0$.
Indeed, in the case when $X=Y=Z$, where
$Z$ is the standard $n$-dimensional normal random vector, $b=0$,
and vector $a$ has a small norm,
we have $|a|^2 = \bar{o}(|\Sigma_X^{1/2}a|\bigr)$.
On the other hand, when $b=a=0$ and ${\rm tr}\Sigma_X={\rm tr}\Sigma_Y$
(i.e. $\mathbb{E}|X|^2 = \mathbb{E}|Y|^2$),
on the right hand side of
\eqref{Kos-eq-2} we get the Hilbert--Schmidt norm
$\|\Sigma_X-\Sigma_Y\|_{HS}$
which is smaller than the nuclear norm $\|\Sigma_X-\Sigma_Y\|_{(1)}$
from the bound \eqref{GNSU-eq}.

\section{Proof of the main technical result}

The main result of this section is the following theorem.

\begin{theorem}\label{T1}
Let $Z$ be the standard $n$-dimensional normal random vector and
let $d\in \mathbb{N}$. There is a number $C(d)$, dependent only on $d$,
such that for any polynomial
$f$ of degree at most $d$ on $\mathbb{R}^n$ and for any
second degree polynomial
$g(x) = \langle Bx, x\rangle +\langle b, x\rangle +\beta$,
where $B$ is a self-adjoint operator,
$b\in \mathbb{R}^n$, and
$\beta\in \mathbb{R}$, one has
$$
d_{\rm TV}\bigl(f(Z), g(Z)\bigr)\le \frac{C(d)}{\sqrt{|s_1|\cdot|s_2|}}\|f(Z)-g(Z)\|_2
$$
provided that there is a two dimensional space $L$
such that the quadratic form $x\mapsto\langle Bx, x\rangle$
is positively or negatively defined on this subspace
and $s_1$ and $s_2$ are the eigenvalues of this quadratic form.
When $f$ is of degree at most $2$, one can take $C(2)=80$.
\end{theorem}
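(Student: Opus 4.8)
The plan is to recast the statement as a regularity property of the law of $g(Z)$ under perturbations: setting $h=f-g$, we have $\|h(Z)\|_2=\|f(Z)-g(Z)\|_2$ and $d_{\rm TV}(f(Z),g(Z))=d_{\rm TV}(g(Z)+h(Z),g(Z))$, so it suffices to bound the latter linearly in $\|h(Z)\|_2$ with constant $C(d)/\sqrt{|s_1s_2|}$. First I would normalize the geometry. Using the rotational invariance of $Z$, choose orthonormal coordinates in which $L=\mathrm{span}(e_1,e_2)$ and $e_1,e_2$ diagonalize the restriction of $x\mapsto\langle Bx,x\rangle$ to $L$; since diagonalizing the $2\times2$ block kills the $x_1x_2$ term of the full form, completing the square in $x_1,x_2$ yields $g(x)=s_1(x_1-\alpha_1(x'))^2+s_2(x_2-\alpha_2(x'))^2+\gamma(x')$, where $x'=(x_3,\dots,x_n)$, the $\alpha_i$ are affine and $\gamma$ is quadratic in $x'$. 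Conditioning on $Z'=z'$, the variables $W_i=Z_i-\alpha_i(z')$ are independent Gaussians and $g(Z)=s_1W_1^2+s_2W_2^2+\gamma(z')$. As $s_1,s_2$ have the same sign this is a shift of a positive combination of two squared Gaussians; convolving the two one-dimensional densities, each of which has only an integrable $u^{-1/2}$ singularity, and using $\int_0^1(1-r)^{-1/2}r^{-1/2}\,dr=\pi$, shows the conditional density of $g(Z)$ is bounded by $C/\sqrt{|s_1s_2|}$ uniformly in $z'$. This is the source of the factor $1/\sqrt{|s_1s_2|}$, and it is exactly here that the hypotheses enter: a single definite direction would leave a non-integrable $u^{-1/2}$, and opposite signs would produce a logarithmic blow-up, matching the optimality example $x_1^2-x_2^2$.

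Next I would represent the total variation distance through a derivative. For $\varphi\in C_0^\infty(\mathbb R)$ with $\|\varphi\|_\infty\le1$ and $g_t=g+th$,
\[
\mathbb E\varphi(f(Z))-\mathbb E\varphi(g(Z))=\int_0^1\mathbb E\bigl[\varphi'(g_t(Z))\,h(Z)\bigr]\,dt,
\]
so it remains to bound each inner expectation linearly in $\|h(Z)\|_2$ and uniformly in $t$. I would remove the derivative on $\varphi$ by integrating by parts in the two distinguished coordinates: with the vector field $V_t=\nabla_{12}g_t/|\nabla_{12}g_t|^2$, which satisfies $\langle\nabla_{12}g_t,V_t\rangle=1$, one has $\varphi'(g_t)=\langle\nabla_{12}\varphi(g_t),V_t\rangle$, and the Gaussian formula $\mathbb E[\partial_i\Psi\cdot F]=\mathbb E[\Psi(Z_iF-\partial_iF)]$ gives $\mathbb E[\varphi'(g_t)h]=\mathbb E[\varphi(g_t)\,\delta_{12}(V_th)]$, where $\delta_{12}$ is the Gaussian divergence in $x_1,x_2$. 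Since $\|\varphi\|_\infty\le1$, this is at most $\mathbb E|\delta_{12}(V_th)|\le\mathbb E[|h|\,|\delta_{12}V_t|]+\mathbb E|\langle\nabla_{12}h,V_t\rangle|$. The factor $\nabla_{12}h$ is converted back to $\|h(Z)\|_2$ by the equivalence of $L^2$-norms of a fixed-degree polynomial and its gradient, namely $\|\nabla h(Z)\|_2\le\sqrt d\,\|h(Z)\|_2$ (the spectral action of the number operator on Wiener chaos of order $\le d$). Cauchy--Schwarz would then reduce everything to bounding $\|\delta_{12}V_t\|_2$ and $\|V_t\|_2$ by $C(d)/\sqrt{|s_1s_2|}$.

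The main obstacle is precisely this last estimate, because $V_t$ is singular on the critical set $\{\nabla_{12}g_t=0\}$ and the straightforward choice already has $\mathbb E|V_t|^2=\mathbb E\,|\nabla_{12}g_t|^{-2}=\infty$: the weight $|\nabla_{12}g|^{-2}$ is not integrable in two dimensions, whereas $|\nabla_{12}g|^{-1}$ is. I would therefore not integrate by parts against $|\nabla_{12}g_t|^{-2}$ directly but pass to polar-type coordinates adapted to the definite form $s_1w_1^2+s_2w_2^2$ (conditionally on $Z'$), where $g$ is radial and the surface integrals over its level ellipses carry only the integrable weight $|\nabla_{12}g|^{-1}$; the shrinking of these ellipses to the centre as the level approaches $\gamma(z')$ is controlled by the same Beta-function bound as in the density estimate, which is why the admissible constant is again $C/\sqrt{|s_1s_2|}$. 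Since the constant in the statement depends only on $g$, the estimate must be organized around the fixed definite structure of $g$ rather than that of $g_t$ (whose planar quadratic part need not remain definite along the interpolation), with the perturbation terms from $h$ absorbed through the norm equivalence above and the $t$-integration.

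Following the technique of \cite{BKP-IZV} and \cite{BKP-homog-DAN}, this radial integration by parts in the two definite directions is the heart of the argument, and tracking the numerical constants in it and in the moment equivalence for $d=2$ is what yields the explicit value $C(2)=80$. I expect the bookkeeping needed to show that the singular weight $|\nabla_{12}g|^{-1}$ and the associated divergence term are controlled uniformly in $t$, in $z'$, and in the affine shifts $\alpha_i(z')$ coming from the conditioning—while keeping the dependence on the constant sharp—to be the most delicate step, with the reduction to the two-dimensional definite model and the chaos norm equivalence being comparatively routine.
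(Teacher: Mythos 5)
Your skeleton matches the paper's: reduction by rotation invariance to $L=\mathrm{span}(e_1,e_2)$, conditioning on the remaining coordinates, a radial integration by parts keyed to the definite planar form, and conversion of gradient terms back to $\|f(Z)-g(Z)\|_2$ by norm equivalence for fixed-degree polynomials (your number-operator constant $\sqrt{d}$ is a correct explicit version of inequality \eqref{nabla-bound}, matching the paper's $\sqrt2$ for $d=2$). You also correctly diagnose that the naive Gaussian integration by parts against $V_t=\nabla_{12}g_t/|\nabla_{12}g_t|^2$ fails because $\mathbb{E}|\nabla_{12}g_t|^{-2}=\infty$ in two dimensions. But precisely at that point the proposal stops being a proof, and the scaffolding you retain is incompatible with your announced fix. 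The interpolation $\mathbb{E}\varphi(f(Z))-\mathbb{E}\varphi(g(Z))=\int_0^1\mathbb{E}[\varphi'(g_t(Z))h(Z)]\,dt$ forces you to handle $\varphi'$ composed with $g_t$, while your fix is to integrate by parts along the radial geometry of $g$. Those do not fit together: a radial integration by parts removes the derivative only from compositions with $g$ itself; to kill $\varphi'(g_t)$ you must divide by $\partial_r g_t$, which can vanish badly since (as you note yourself) the planar quadratic part of $g_t$ need not stay definite, and the substitute identity $\varphi'(g_t)\,\partial_r g=\partial_r[\varphi(g_t)]-t\,\varphi'(g_t)\,\partial_r h$ merely reproduces an uncontrolled $\varphi'(g_t)$ term, so the loop never closes. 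Your fallback --- level ellipses of $g$ carrying the weight $|\nabla_{12}g|^{-1}$ with a Beta-function bound --- is an argument for boundedness of the conditional density of $g(Z)$, which cannot suffice: density bounds of that kind control $d_{\rm TV}(g(Z)+h,g(Z))$ only for constant shifts $h$, not for arbitrary polynomial perturbations.

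The missing idea, which is the heart of the paper's Lemma \ref{lem1}, is to drop the interpolation entirely and treat the difference through the antiderivative $\Phi(u)=\int_{-\infty}^u\varphi(\tau)\,d\tau$. Writing $f_\theta(r)=f(t+r\hat\theta)$, $g_\theta(r)=g(t+r\hat\theta)$ with $\hat\theta=(\cos\theta,\sin\theta)$ and $t$ the center of the completed square of $g$ in the plane, one has
$$
\bigl(\varphi(f_\theta)-\varphi(g_\theta)\bigr)\,\partial_r g_\theta
=\partial_r\bigl(\Phi(f_\theta)-\Phi(g_\theta)\bigr)-\varphi(f_\theta)\bigl(\partial_r f_\theta-\partial_r g_\theta\bigr),
$$
so that only $\|\varphi\|_\infty$ and $|\Phi(f_\theta)-\Phi(g_\theta)|\le\|\varphi\|_\infty\,|f_\theta-g_\theta|$ ever enter; no derivative of $\varphi$ survives. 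The crucial cancellation is that $\partial_r g_\theta=2r\langle B\hat\theta,\hat\theta\rangle$, so the factor $r$ in the polar Jacobian exactly offsets the vanishing of $\partial_r g_\theta$ at the center: one divides only by the angular factor $2\langle B\hat\theta,\hat\theta\rangle$, there is no singular weight anywhere, and the constant arises solely from
$$
\int_0^{2\pi}\frac{d\theta}{|s_1|\cos^2\theta+|s_2|\sin^2\theta}=\frac{2\pi}{\sqrt{|s_1|\,|s_2|}}.
$$
After one integration by parts in $r$, the estimate reduces to Gaussian-weighted sup-norms of $f-g$ and $\nabla(f-g)$ in the plane, which Lemma \ref{lem2} compares with $L^2$-norms via finite-dimensional norm equivalence (with explicit constants in degree $2$); that comparison, together with Corollary \ref{cor1}, is what produces $C(2)=80$ --- not any level-set bookkeeping. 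Without this (or an equivalent) mechanism for eliminating $\varphi'$ while dividing only by quantities controlled by the fixed form of $g$, your outline cannot be completed as stated.
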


We firstly prove the following auxiliary lemma.

\begin{lemma}\label{lem1}
Let $f$ be any polynomial on $\mathbb{R}^2$
and let $g$ be a second degree polynomial on $\mathbb{R}^2$,
i.e. it has the form
$g(x) = \langle Bx, x\rangle +\langle b, x\rangle +\beta$
for some self adjoint $B$, $b\in \mathrm{\mathbb{R}^2}$, and $\beta\in \mathbb{R}$.
Assume that $B$ is positively or negatively defined.
Then for any $\varphi\in C_0^\infty(\mathbb{R})$, $\|\varphi\|_\infty\le 1$,
one has
\begin{multline*}
\mathbb{E}\bigl[\varphi\bigl(f(X)\bigr)\bigr] - \mathbb{E}\bigl[\varphi\bigl(g(X)\bigr)\bigr]
\\
\le
\frac{\sqrt{2\pi}}{\sqrt{|s_1|\cdot|s_2|}}
\Bigl[3\sup_{x\in \mathbb{R}^2}\bigl[|f(x)-g(x)|e^{-\frac{1}{4}|x|^2}\bigr]
+
\sup\limits_{x\in \mathbb{R}^2}\bigl[|\nabla f(x) - \nabla g(x)| e^{-\frac{1}{4}|x|^2}\bigr]\Bigr],
\end{multline*}
where $s_1$ and $s_2$ are the eigenvalues of $B$ and
where $X$ is the standard normal random vector in $\mathbb{R}^2$.
\end{lemma}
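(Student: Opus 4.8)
The plan is to reduce $g$ to a radially monotone model by an orthogonal change of variables and then to integrate by parts one variable at a time along the level sets of $g$. First I would complete the square, writing $g(x)=\langle B(x-x_0),x-x_0\rangle+c$ with $x_0=-\tfrac12 B^{-1}b$, and assume without loss of generality that $B$ is positive definite (the negative definite case follows by replacing $\varphi$ with $\varphi(-\,\cdot\,)$ and $g$ with $-g$). Passing to the orthonormal eigenbasis of $B$ via an orthogonal map $O$ and setting $\xi=O^{\top}(x-x_0)$, the form becomes $\tilde g(\xi)=s_1\xi_1^2+s_2\xi_2^2+c$, while $\tilde h:=\tilde f-\tilde g$ satisfies $|\nabla_\xi\tilde h|=|\nabla_x h|$ and the Gaussian weight $\tilde\gamma(\xi)=\gamma(x_0+O\xi)$ keeps the isotropic radial decay coming from $|x_0+O\xi|^2=|\xi|^2+2\langle x_0,O\xi\rangle+|x_0|^2$. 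I insist on an orthogonal (rather than a $B^{1/2}$) reduction precisely because it preserves both $|\nabla h|$ and the rotation invariance of the standard Gaussian; the eigenvalues then reappear only through the angular factor $q(\omega)=s_1\omega_1^2+s_2\omega_2^2$ and the identity $\int_{S^1}q(\omega)^{-1}\,d\omega=2\pi/\sqrt{s_1 s_2}$, which is the exact source of the constant $1/\sqrt{|s_1 s_2|}$.

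Next I would use polar coordinates $\xi=r\omega$ and, for each fixed direction $\omega$, the substitution $u=r^2 q(\omega)+c$, which is a diffeomorphism of $(0,\infty)$ onto $(c,\infty)$ thanks to the definiteness of $B$ (this monotonicity is exactly what fails for an indefinite form, matching the sharpness remark in the introduction). Writing $\eta(u)=\tilde h$ and $W(u)=\tilde\gamma$ along the ray, the inner integral becomes $\tfrac{1}{2q(\omega)}\int_c^\infty[\varphi(u+\eta)-\varphi(u)]\,W\,du$, and I would apply the elementary identity $\varphi(u+\eta)-\varphi(u)=\frac{d}{du}\int_u^{u+\eta}\varphi(s)\,ds-\varphi(u+\eta)\,\eta'(u)$ followed by integration by parts in $u$ on the first term. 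Since $\|\varphi\|_\infty\le 1$ gives $|\int_u^{u+\eta}\varphi|\le|\eta|$ and $|\varphi(u+\eta)|\le 1$, this produces three contributions bounded by $|\eta(c)|\,W(c)$ (a boundary term at $r=0$), by $\int|\eta|\,|W'|\,du$, and by $\int|\eta'|\,W\,du$. The crucial bookkeeping is that $\eta'$ and $W'$ each carry a factor $\tfrac{1}{2rq}$ that is exactly cancelled by $du=2rq\,dr$, so after returning to $r$ the three terms read $\gamma(x_0)|h(x_0)|$, $\int_0^\infty|\tilde h|\,|\langle x,O\omega\rangle|\,\tilde\gamma\,dr$, and $\int_0^\infty|\langle\nabla_\xi\tilde h,\omega\rangle|\,\tilde\gamma\,dr$, all free of inverse powers of $r$.

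Finally I would insert the pointwise bounds $|h|\le M_0\,e^{|x|^2/4}$ and $|\nabla h|\le M_1\,e^{|x|^2/4}$ (finite since $f,g$ are polynomials, with $M_0,M_1$ the two suprema in the statement), use $e^{|x|^2/4}\tilde\gamma=\tfrac{1}{2\pi}e^{-|x|^2/4}$, and evaluate the resulting one-dimensional Gaussian integrals. The key point is that, because $|O\omega|=1$, the exponent is $r^2+2\langle x_0,O\omega\rangle r+|x_0|^2$ with leading coefficient $1$ uniformly in $\omega$, so integrals such as $\int_0^\infty e^{-|x_0+rO\omega|^2/4}\,dr\le 2\sqrt\pi$ and $\int_{\mathbb R}|t|e^{-t^2/4}\,dt=4$ are bounded independently of the eigenvalues. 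Bounding $|\langle\nabla_\xi\tilde h,\omega\rangle|\le|\nabla_x h|$ only at the very end, and keeping the iterated (not the planar) form so that this step stays harmless, I would then carry out the angular integration via $\int_{S^1}q(\omega)^{-1}\,d\omega=2\pi/\sqrt{s_1 s_2}$; this collects everything into $\tfrac{1}{\sqrt{s_1 s_2}}(C_0 M_0+C_1 M_1)$ with explicit $C_0,C_1$ comfortably below $3\sqrt{2\pi}$ and $\sqrt{2\pi}$.

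I expect the main obstacle to be the neighborhood of the unique critical point of $g$, namely the center $x_0$, i.e. $r=0$. There the passage to the level variable $u$ degenerates, $dr/du\sim 1/(2rq)\to\infty$, and the integration by parts leaves a genuine boundary term; the scheme works only because the factor $r$ in the area element absorbs these singularities and because definiteness confines all such difficulties to a single point while keeping $\int_{S^1}q^{-1}$ finite. Verifying the vanishing of the boundary contributions at infinity (Gaussian decay against the polynomial growth of $h$) and the almost everywhere validity of the per-ray computation are the remaining routine points.
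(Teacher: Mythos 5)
Your proposal is correct and is essentially the paper's own argument: completing the square, polar coordinates about the center, the antiderivative of $\varphi$ with an integration by parts producing exactly the same three terms (boundary term at the center, a term with the Gaussian's radial derivative, and the gradient-difference term), and the angular integral $\int_0^{2\pi}\bigl(|s_1|\cos^2\theta+|s_2|\sin^2\theta\bigr)^{-1}d\theta=2\pi/\sqrt{|s_1 s_2|}$ as the source of the constant. Your substitution to the level variable $u=g$ along each ray is only a reparametrization of the paper's step of dividing by $\partial_r g_\theta=2r\langle B\hat\theta,\hat\theta\rangle$ and integrating by parts in $r$, so the two proofs coincide in substance.
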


\begin{proof}
We can assume that $g(x) = \langle B(x-t), x-t\rangle+c$
where $t\in \mathbb{R}^2$,
$c\in \mathbb{R}$.
In polar coordinates with respect to the center $t$ we have:
\begin{multline*}
\int_{\mathbb{R}^2}[\varphi(f(x))-\varphi(g(x))]\, \gamma(dx)
\\
=
(2\pi)^{-1}\int_0^{2\pi}\int_{0}^{+\infty}
[\varphi(f(t+r\hat{\theta})) - \varphi(g(t+r\hat{\theta}))]
re^{-\frac{1}{2}|t+r\hat{\theta}|^2}\, dr\, d\theta,
\end{multline*}
where $\hat{\theta}:=(\cos\theta, \sin\theta)$.
For a fixed $\theta\in [0, 2\pi)$
we set $f_\theta(r):=f(t+r\hat{\theta})$, $g_\theta(r):=g(t+r\hat{\theta})$, and
$\varrho_\theta(r) = (2\pi)^{-1}e^{-\frac{1}{2}|t+r\hat{\theta}|^2}$.
Let
$$
\Phi(t) = \int_{-\infty}^t\varphi(\tau)d\tau,
$$
then
$$
\partial_r(\Phi(f_\theta) - \Phi(g_\theta)) =
\partial_r f_\theta\cdot\varphi(f_\theta)-\partial_r g_\theta\cdot \varphi(g_\theta)
=
\bigl(\varphi(f_\theta)-\varphi(g_\theta)\bigr)\partial_r g_\theta+
\varphi(f_\theta)\bigl(\partial_r f_\theta- \partial_r g_\theta\bigr).
$$
We note that $\partial_r g_\theta(r) = 2r\langle B\hat{\theta}, \hat{\theta}\rangle$.
Thus,
\begin{multline*}
\int_{0}^{+\infty}
[\varphi(f_\theta) - \varphi(g_\theta)]
r\varrho_\theta\, dr
=
\frac{1}{2\langle B\hat{\theta},\hat{\theta}\rangle}\int_{0}^{+\infty}
[\varphi(f_\theta) - \varphi(g_\theta)]\partial_rg_\theta
\varrho_\theta\, dr
\\
=
\frac{1}{2\langle B\hat{\theta}, \hat{\theta}\rangle}\int_{0}^{+\infty}
\partial_r(\Phi(f_\theta) - \Phi(g_\theta))
\varrho_\theta\, dr
-
\frac{1}{2\langle B\hat{\theta}, \hat{\theta}\rangle}\int_{0}^{+\infty}
\varphi(f_\theta)\bigl(\partial_r f_\theta- \partial_r g_\theta\bigr)
\varrho_\theta\, dr.
\end{multline*}
We now integrate by parts in the first term above:
\begin{multline*}
\int_{0}^{+\infty}
\partial_r(\Phi(f_\theta) - \Phi(g_\theta))\varrho_\theta\, dr
\\
=
-\bigl(\Phi(f_\theta(0)) - \Phi(g_\theta(0))\bigr)\varrho_\theta(0)
+
\int_{0}^{+\infty}(\Phi(f_\theta) - \Phi(g_\theta))
\langle t+r\hat{\theta}, \hat{\theta}\rangle\varrho_\theta\, dr.
\end{multline*}
We note that
$|\Phi(f_\theta) - \Phi(g_\theta)|\le \|\varphi\|_\infty|f_\theta - g_\theta|$.
Therefore,
one gets
\begin{multline*}
\int_{0}^{+\infty}
[\varphi(f_\theta) - \varphi(g_\theta)]
r\varrho_\theta\, dr
\le
\frac{1}{2|\langle B\hat{\theta}, \hat{\theta}\rangle|}
\Bigl[(2\pi)^{-1}\sup_{x\in \mathbb{R}^2}\bigl[|f(x)-g(x)|e^{-\frac{1}{2}|x|^2}\bigr]
\\
+
\int_{0}^{+\infty}\bigl[|f_\theta - g_\theta|\cdot
|\langle t+r\hat{\theta}, \hat{\theta}\rangle| +
|\partial_r f_\theta- \partial_r g_\theta|\bigr]\varrho_\theta\, dr\Bigr].
\end{multline*}
We now note, that
\begin{multline*}
|f_\theta - g_\theta|\cdot
|\langle t+r\hat{\theta}, \hat{\theta}\rangle|\varrho_\theta
\le
(2\pi)^{-1}
\sup\limits_{x\in \mathbb{R}^2}\bigl[|f(x) - g(x)| |x| e^{-\frac{3}{8}|x|^2}\bigr]
e^{-\frac{1}{8}(r^2+2r\langle t, \hat{\theta}\rangle+|t|^2)}
\\
\le
2(2\pi)^{-1}
\sup\limits_{x\in \mathbb{R}^2}\bigl[|f(x) - g(x)| e^{-\frac{1}{4}|x|^2}\bigr]
e^{-\frac{1}{8}(r^2-2r|t|+|t|^2)},
\end{multline*}
where in the last estimate we use the bound $|x|e^{-\frac{1}{8}|x|^2}\le 2$.
We also note that
$$
|\partial_r f_\theta- \partial_r g_\theta| =
|\langle\nabla f(t+r\hat{\theta}) - \nabla g(t+r\hat{\theta}), \hat{\theta}\rangle|
\le |\nabla f(t+r\hat{\theta}) - \nabla g(t+r\hat{\theta})|.
$$
Thus,
\begin{multline*}
|\partial_r f_\theta- \partial_r g_\theta|\varrho_\theta
\le
(2\pi)^{-1}
\sup\limits_{x\in \mathbb{R}^2}\bigl[|\nabla f(x) - \nabla g(x)| e^{-\frac{1}{4}|x|^2}\bigr]
e^{-\frac{1}{4}(r^2+2r\langle t, \hat{\theta}\rangle+|t|^2)}
\\
\le
(2\pi)^{-1}
\sup\limits_{x\in \mathbb{R}^2}\bigl[|\nabla f(x) - \nabla g(x)| e^{-\frac{1}{4}|x|^2}\bigr]
e^{-\frac{1}{8}(r^2-2r|t|+|t|^2)},
\end{multline*}
Since
$$
\int_{0}^{+\infty}e^{-\frac{1}{8}(r^2-2r|t|+|t|^2)}\, dr
=
\int_{0}^{+\infty}e^{-\frac{1}{8}(r-|t|)^2}\, dr
\le
\int_{-\infty}^{+\infty}e^{-\frac{1}{8}(r-|t|)^2}\, dr
=\int_{-\infty}^{+\infty}e^{-\frac{1}{8}s^2}\, ds=
2\sqrt{2\pi},
$$
we get the bound
\begin{multline*}
\int_{0}^{+\infty}
[\varphi(f_\theta) - \varphi(g_\theta)]
r\varrho_\theta\, dr
\\
\le
\frac{1}{2\sqrt{2\pi}|\langle B\hat{\theta}, \hat{\theta}\rangle|}
\Bigl[5\sup_{x\in \mathbb{R}^2}\bigl[|f(x)-g(x)|e^{-\frac{1}{4}|x|^2}\bigr]
+
2\sup\limits_{x\in \mathbb{R}^2}\bigl[|\nabla f(x) - \nabla g(x)| e^{-\frac{1}{4}|x|^2}\bigr]\Bigr].
\end{multline*}
Without loss of generality we assume that
$\langle B\hat{\theta}, \hat{\theta}\rangle = s_1\cos^2\theta+s_2\sin^2\theta$
and $s_1$ and $s_2$ are of the same sign.
Thus, $|\langle B\hat{\theta}, \hat{\theta}\rangle| =
|s_1|\cos^2\theta+|s_2|\sin^2\theta$
and
\begin{multline*}
\int_{\mathbb{R}^2}[\varphi(f(x))-\varphi(g(x))]\, \gamma(dx)
\\
\le\frac{1}{\sqrt{2\pi}}
\Bigl[3\sup_{x\in \mathbb{R}^2}\bigl[|f(x)-g(x)|e^{-\frac{1}{4}|x|^2}\bigr]
+
\sup\limits_{x\in \mathbb{R}^2}\bigl[|\nabla f(x) - \nabla g(x)| e^{-\frac{1}{4}|x|^2}\bigr]\Bigr]
\int_0^{2\pi}\frac{d\theta}{|s_1|\cos^2\theta+|s_2|\sin^2\theta}.
\end{multline*}
We finally recall that
$$
\int_0^{2\pi}\frac{d\theta}{|s_1|\cos^2\theta+|s_2|\sin^2\theta}
=
4\int_{0}^{\pi/2}\frac{d{\rm tg}\theta}{|s_1|+|s_2|{\rm tg}^2\theta}
=
\frac{2\pi}{\sqrt{|s_1|\cdot|s_2|}}
$$
The lemma is proved.
\end{proof}

\begin{lemma}\label{lem2}
Let $X$ be the standard normal random vector
on $\mathbb{R}^2$.
Let $\ell_1, \ell_2$ be affine functions on $\mathbb{R}^2$, then
$$
\sup_{x\in\mathbb{R}^2}\bigl[\sqrt{|\ell_1(x)|^2+|\ell_2(x)|^2}e^{-\frac{1}{4}|x|^2}\bigr]
\le
\sqrt3\bigl(\mathbb{E}\bigl[|\ell_1(X)|^2+|\ell_2(X)|^2\bigr]\bigr)^{1/2}.
$$
Let $G$ be a second degree polynomial on $\mathbb{R}^2$, then
$$
\sup_{x\in\mathbb{R}^2}\bigl[|G(x)|e^{-\frac{1}{4}|x|^2}\bigr]
\le
6\sqrt2\bigl(\mathbb{E}\bigl[ |G(X)|^2\bigr]\bigr)^{1/2}.
$$
Moreover, for every $d\in \mathbb{N}$ there are positive numbers $c_1(d)$
and $c_2(d)$, dependent only on $d$, such that,
for every pair of polynomials $\ell_1, \ell_2$ of degree at most $d$,
one has
$$
\sup_{x\in\mathbb{R}^2}\bigl[\sqrt{|\ell_1(x)|^2+|\ell_2(x)|^2}e^{-\frac{1}{4}|x|^2}\bigr]
\le
c_1(d)\bigl(\mathbb{E}\bigl[|\ell_1(X)|^2+|\ell_2(X)|^2\bigr]\bigr)^{1/2}.
$$
and, for every polynomial $G$ of degree at most $d$, one has
$$
\sup_{x\in\mathbb{R}^2}\bigl[|G(x)|e^{-\frac{1}{4}|x|^2}\bigr]
\le
c_2(d)\bigl(\mathbb{E}\bigl[ |G(X)|^2\bigr]\bigr)^{1/2}.
$$
\end{lemma}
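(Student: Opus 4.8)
The plan is to exploit the fact that, on the finite-dimensional space of polynomials of degree at most $d$ on $\mathbb{R}^2$, both the weighted supremum $P\mapsto\sup_x|P(x)|e^{-\frac14|x|^2}$ and the Gaussian $L^2$-norm $P\mapsto(\mathbb{E}|P(X)|^2)^{1/2}$ are genuine norms, hence must be equivalent; the content of the lemma is to pin down explicit constants in the two lowest-degree cases and to record the qualitative statement for general $d$. Note that the weighted supremum is finite on this space because $|x|^ke^{-\frac14|x|^2}$ is bounded for every $k$.

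For the affine estimate I would argue pointwise. Writing $\ell(x)=\langle v,x\rangle+c$, the identity $\mathbb{E}|\ell(X)|^2=|v|^2+c^2$ (using $\mathbb{E}\langle v,X\rangle=0$ and $\mathbb{E}\langle v,X\rangle^2=|v|^2$) together with the Cauchy--Schwarz bound $|\ell(x)|^2=|\langle(v,c),(x,1)\rangle|^2\le(|v|^2+c^2)(|x|^2+1)$ gives
\[
|\ell_1(x)|^2+|\ell_2(x)|^2\le(|x|^2+1)\,\mathbb{E}\bigl[|\ell_1(X)|^2+|\ell_2(X)|^2\bigr].
\]
Multiplying by $e^{-\frac12|x|^2}$ and taking the supremum, the claim reduces to the elementary one-variable bound $\sup_{u\ge0}(u+1)e^{-u/2}=2e^{-1/2}\le3$, which yields the constant $\sqrt3$.

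For the quadratic estimate the key device is to expand $G$ in the Gaussian-orthonormal basis of the degree-$\le2$ space, namely $1,\ x_1,\ x_2,\ x_1^2-1,\ x_2^2-1,\ x_1x_2$. Writing $G=c_0+a_1x_1+a_2x_2+b_1(x_1^2-1)+b_2(x_2^2-1)+b_{12}x_1x_2$, orthogonality together with the moments $\mathbb{E}(X_i^2-1)^2=2$ and $\mathbb{E}(X_1X_2)^2=1$ gives the exact identity $\mathbb{E}|G(X)|^2=c_0^2+a_1^2+a_2^2+2b_1^2+2b_2^2+b_{12}^2$. On the other hand the triangle inequality bounds $\sup_x|G(x)|e^{-\frac14|x|^2}$ by the weighted sum of the individual suprema $\sup_x|x_i|e^{-\frac14|x|^2}=\sqrt2\,e^{-1/2}$, $\sup_x|x_i^2-1|e^{-\frac14|x|^2}=4e^{-5/4}$, and $\sup_x|x_1x_2|e^{-\frac14|x|^2}=2e^{-1}$, each a routine one-variable optimization. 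A single application of Cauchy--Schwarz, matching these suprema against the coefficients as they are weighted in the identity for $\mathbb{E}|G(X)|^2$, then produces a constant of the form $\bigl(1+4e^{-1}+16e^{-5/2}+4e^{-2}\bigr)^{1/2}$, which is comfortably below $6\sqrt2$.

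Finally, for arbitrary $d$ no computation is needed: both maps above are norms on the finite-dimensional spaces of polynomials (respectively, of pairs of polynomials) of degree at most $d$, and the equivalence of all norms on a finite-dimensional space furnishes $c_2(d)$; the pair version then follows with $c_1(d)=\sqrt2\,c_2(d)$ from $\sqrt{|\ell_1|^2+|\ell_2|^2}\le|\ell_1|+|\ell_2|$ and two applications of the scalar bound. I expect the only genuine labor to be the explicit quadratic constant---verifying the three one-variable suprema and tracking the weights in the Cauchy--Schwarz step---whereas the affine case is immediate and the general-$d$ statement is soft.
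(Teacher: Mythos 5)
Your proposal is correct, and it shares the paper's overall architecture: explicit elementary constants for the degree-one and degree-two cases, followed by the soft finite-dimensional norm-equivalence argument for general $d$ (the paper applies that equivalence directly to pairs $(\ell_1,\ell_2)$ to get $c_1(d)$, whereas you derive $c_1(d)=\sqrt2\,c_2(d)$ from the scalar case; both are fine). Where you genuinely diverge is in the execution of the explicit cases. For the affine bound, the paper expands $|\ell_j(x)|^2$ in coordinates and absorbs the cross terms via $2|a||b|\le a^2+b^2$ together with $\sup_t|t|e^{-t^2/2}\le1$, $\sup_t t^2e^{-t^2/2}\le1$; your Cauchy--Schwarz pairing of $(v,c)$ against $(x,1)$ reaches the same constant $\sqrt3$ (in fact the sharper $\sqrt{2e^{-1/2}}$) in one line. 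For the quadratic bound the difference is more substantial: the paper works in the monomial basis, computes $\mathbb{E}|G(X)|^2$ there, and then needs a completing-the-square manipulation to extract the lower bound $\mathbb{E}|G(X)|^2\ge\tfrac13\sum(\text{coefficients})^2$ before comparing with the weighted supremum, yielding $6\sqrt2$; you instead expand $G$ in the Gaussian-orthogonal (Hermite) basis $1,x_1,x_2,x_1^2-1,x_2^2-1,x_1x_2$, which diagonalizes the $L^2$-norm exactly and lets a single weighted Cauchy--Schwarz give the constant $\bigl(1+4e^{-1}+16e^{-5/2}+4e^{-2}\bigr)^{1/2}\approx2.1$, noticeably better than $6\sqrt2$. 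What each buys: the paper's route is self-contained and needs no orthogonality facts, only moment computations already present in its Lemma~3.3; your route eliminates the ad hoc algebraic lower bound (the least transparent step in the paper's proof), produces sharper constants, and would propagate to a smaller numerical constant in Theorem~2.1 if carried through. All your numerical claims check out: the three one-variable suprema are $\sqrt2e^{-1/2}$, $4e^{-5/4}$, $2e^{-1}$ as stated, and the orthogonality identity $\mathbb{E}|G(X)|^2=c_0^2+a_1^2+a_2^2+2b_1^2+2b_2^2+b_{12}^2$ is exact.
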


\begin{proof}
Let $\ell_j(x)=a_{1,j}x_1+a_{2,j}x_2+b_j$.
Then
$$
\mathbb{E}\bigl[|\ell_1(X)|^2+|\ell_2(X)|^2\bigr]
=
\sum_{j=1}^2a_{1,j}^2+a_{2,j}^2+b_j^2.
$$
We note that
$\sup\limits_{t\in \mathbb{R}}\bigl[|t|^me^{-\frac{1}{2}t^2}\bigr]=(m)^{\frac{m}{2}}e^{-\frac{m}{2}}$
for any $m\in \mathbb{N}$. Thus,
$\sup\limits_{t\in \mathbb{R}}\bigl[|t|e^{-\frac{1}{2}t^2}\bigr]=e^{-1/2}\le1$ and
$\sup\limits_{t\in \mathbb{R}}\bigl[t^2e^{-\frac{1}{2}t^2}\bigr]=2e^{-1}\le 1$.
Therefore,
\begin{multline*}
\bigl(\sum_{j=1}^2|\ell_j(x)|^2\bigr)e^{-\frac{1}{2}|x|^2}
\le
\sum_{j=1}^2\bigl(a_{1,j}^2+a_{2,j}^2+b_j^2+2|a_{1,j}||a_{2,j}|
+2|b_j||a_{1,j}|+2|b_j||a_{2,j}|\bigr)
\\
\le
3\sum_{j=1}^2(a_{1,j}^2+a_{2,j}^2+b_j^2).
\end{multline*}
The first estimate of the lemma is proved.

Let $G(x)=a_{11}x_1^2+2a_{12}x_1x_2+a_{22}x_2^2 + b_1x_1+b_2x_2+c$,
then
\begin{multline*}
\mathbb{E}\bigl[ |G(X)|^2\bigr]
=
3(a_{11}^2+a_{22}^2)+4a_{12}^2+b_1^2+b_2^2+c^2+2a_{11}a_{22}+2c(a_{11}+a_{22})
\\
=
\tfrac{3}{2}(a_{11}^2+a_{22}^2) +4a_{12}^2+b_1^2+b_2^2 + \tfrac{1}{3}c^2 -a_{11}a_{22}+
\bigl(\tfrac{\sqrt2}{\sqrt3}c+\tfrac{\sqrt3}{\sqrt2}(a_{11}+a_{22})\bigr)^2
\\
\ge
a_{11}^2+a_{22}^2 +4a_{12}^2+b_1^2+b_2^2 + \tfrac{1}{3}c^2
\ge\tfrac{1}{3}(a_{11}^2+a_{22}^2 +a_{12}^2+b_1^2+b_2^2 + c^2).
\end{multline*}
We note that
$\sup\limits_{t\in \mathbb{R}}\bigl[|t|^me^{-\frac{1}{4}t^2}\bigr]=(2m)^{\frac{m}{2}}e^{-\frac{m}{2}}$
for any $m\in \mathbb{N}$. Thus,
$\sup\limits_{t}\bigl[|t|e^{-t^2/4}\bigr]=\sqrt2e^{-1/2}\le1$ and
$\sup\limits_{t}\bigl[t^2e^{-t^2/4}\bigr]=4e^{-1}\le 2$.
Therefore,
\begin{multline*}
\sup_{x\in\mathbb{R}^2}\bigl[|G(x)|e^{-\frac{|x|^2}{4}}\bigr]
\le
2|a_{11}|+2|a_{12}|+2|a_{22}|+|b_1|+|b_2|+|c|
\\
\le
2(|a_{11}|+|a_{12}|+|a_{22}|+|b_1|+|b_2|+|c|)
\le
2\sqrt6\sqrt{a_{11}^2+a_{12}^2 +a_{22}^2 +b_1^2+b_2^2 + c^2}.
\end{multline*}
Thus,
$$
\sup_{x\in\mathbb{R}^2}\bigl[|G(x)|e^{-\frac{|x|^2}{4}}\bigr]
\le
6\sqrt2 \bigl(\mathbb{E}\bigl[ |G(X)|^2\bigr]\bigr)^{1/2}.
$$
The second claim of the lemma is proved.

Now, if we consider a linear space $L$ of all
mappings $(\ell_1, \ell_2)$, where $\ell_1$ and $\ell_2$
are polynomials of degree at most $d$,
then the both expressions
$$(\ell_1, \ell_2)\mapsto
\sup_{x\in\mathbb{R}^2}\bigl[\sqrt{|\ell_1(x)|^2+|\ell_2(x)|^2}e^{-\frac{1}{4}|x|^2}\bigr]
$$
and
$$(\ell_1, \ell_2)\mapsto
\bigl(\mathbb{E}\bigl[|\ell_1(X)|^2+|\ell_2(X)|^2\bigr]\bigr)^{1/2}.
$$
define norms on the space $L$.
Every pair of norms on the finite dimensional space is equivalent
which implies the existence of the number $c_1(d)$.
By the similar reasoning one gets the existence of $c_2(d)$.
The lemma is proved.
\end{proof}

\begin{lemma}\label{lem3}
Let $A\colon\mathbb{R}^n\to\mathbb{R}^n$ be a self-adjoint operator,
let $a\in\mathbb{R}^n$ be a fixed vector,
and let $\alpha\in\mathbb{R}$. Let $Z=(Z_1,\ldots, Z_n)$
be the standard normal random vector, i.e. its components $Z_j$
are independent random variables with $\mathcal{N}(0, 1)$ distribution.
Then
$$
\mathbb{E}\bigl[\langle AZ, Z\rangle + \langle a, Z\rangle+\alpha\bigr]
=
{\rm tr}A + \alpha
$$
and
$$
\mathbb{E}\bigl[|\langle AZ, Z\rangle + \langle a, Z\rangle+\alpha|^2\bigr]
=
2\|A\|_{HS}^2+({\rm tr}A+\alpha)^2+|a|^2.
$$
\end{lemma}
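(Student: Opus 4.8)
The plan is to exploit the rotational invariance of the standard Gaussian law together with the one-dimensional moments $\mathbb{E}[Z_j]=0$, $\mathbb{E}[Z_j^2]=1$, $\mathbb{E}[Z_j^3]=0$, and $\mathbb{E}[Z_j^4]=3$. Since $A$ is self-adjoint, I would diagonalize it as $A=U^{*}DU$ with $U$ orthogonal and $D={\rm diag}(\lambda_1,\ldots,\lambda_n)$. Setting $W:=UZ$, the orthogonal invariance of the standard normal distribution gives that $W$ is again a standard normal vector, and one has $\langle AZ,Z\rangle=\langle DW,W\rangle=\sum_j\lambda_j W_j^2$ and $\langle a,Z\rangle=\langle Ua,W\rangle=\sum_j \tilde a_j W_j$, where $\tilde a:=Ua$ satisfies $|\tilde a|=|a|$. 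Because ${\rm tr}\,A=\sum_j\lambda_j$ and $\|A\|_{HS}^2=\sum_j\lambda_j^2$ are also unchanged under this orthogonal conjugation, it suffices to prove both identities in the case when $A=D$ is diagonal and $a=\tilde a$.

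For the first identity I would take expectations term by term: since $\mathbb{E}[W_j^2]=1$ and $\mathbb{E}[W_j]=0$, one immediately gets $\mathbb{E}[\sum_j\lambda_j W_j^2+\sum_j\tilde a_j W_j+\alpha]=\sum_j\lambda_j+\alpha={\rm tr}\,A+\alpha$, which is the asserted mean.

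For the second identity I would write $q(W)=S_2+S_1+\alpha$ with $S_2:=\sum_j\lambda_j W_j^2$ and $S_1:=\sum_j\tilde a_j W_j$, expand the square, and evaluate the resulting expectations. The mixed and constant terms are straightforward: $\mathbb{E}[S_1]=0$, $\mathbb{E}[S_2]={\rm tr}\,A$, $\mathbb{E}[S_1^2]=\sum_j\tilde a_j^2=|a|^2$ (the cross terms vanish by independence), and $\mathbb{E}[S_1 S_2]=0$ (each summand contains an odd power of some independent $W_k$). The only computation needing attention is $\mathbb{E}[S_2^2]=\sum_{j,k}\lambda_j\lambda_k\mathbb{E}[W_j^2 W_k^2]$: the diagonal terms $j=k$ contribute $\mathbb{E}[W_j^4]=3$, while the off-diagonal terms contribute $\mathbb{E}[W_j^2]\mathbb{E}[W_k^2]=1$, so $\mathbb{E}[S_2^2]=3\sum_j\lambda_j^2+\sum_{j\neq k}\lambda_j\lambda_k=2\sum_j\lambda_j^2+(\sum_j\lambda_j)^2=2\|A\|_{HS}^2+({\rm tr}\,A)^2$. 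Collecting everything yields $\mathbb{E}[q^2]=2\|A\|_{HS}^2+({\rm tr}\,A)^2+|a|^2+\alpha^2+2\alpha\,{\rm tr}\,A=2\|A\|_{HS}^2+({\rm tr}\,A+\alpha)^2+|a|^2$, as claimed.

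There is no substantial obstacle here; the single point that demands care is the fourth-moment bookkeeping in $\mathbb{E}[S_2^2]$, where one must keep the diagonal contribution $\mathbb{E}[W_j^4]=3$ separate from the off-diagonal contribution in order to recover the exact coefficient $2$ in front of $\|A\|_{HS}^2$. Alternatively, one could avoid the diagonalization altogether and compute directly through Wick's formula $\mathbb{E}[Z_iZ_jZ_kZ_l]=\delta_{ij}\delta_{kl}+\delta_{ik}\delta_{jl}+\delta_{il}\delta_{jk}$, which produces $({\rm tr}\,A)^2$ from the first pairing and $\|A\|_{HS}^2$ from each of the remaining two (using $A=A^{*}$), giving the same identity.
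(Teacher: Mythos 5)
Your proof is correct and follows essentially the same route as the paper's: reduce to diagonal $A$ by orthogonal invariance of the standard Gaussian, then expand the square and use the moments $\mathbb{E}[Z_j]=\mathbb{E}[Z_j^3]=0$, $\mathbb{E}[Z_j^2]=1$, $\mathbb{E}[Z_j^4]=3$, with the same diagonal/off-diagonal bookkeeping giving $2\|A\|_{HS}^2+({\rm tr}A)^2$. The only difference is presentational: you spell out the conjugation $W=UZ$ that the paper subsumes into a ``without loss of generality,'' and you note the Wick-formula alternative, which the paper does not use.
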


\begin{proof}
Without loss of generality
we can assume that $Ax = (\lambda_1x_1, \ldots, \lambda_nx_n)$.
Then,
$$
\mathbb{E}\bigl[\langle AZ, Z\rangle + \langle a, Z\rangle+\alpha\bigr]
=
\mathbb{E}\bigl[\sum_{j=1}^n \lambda_jZ_j^2\bigr] + \alpha
\\
=
\sum_{j=1}^n \lambda_j+ \alpha
={\rm tr}A + \alpha.
$$

For the second part
we have
\begin{multline*}
\mathbb{E}\bigl[|\langle AZ, Z\rangle + \langle a, Z\rangle+\alpha|^2\bigr]
\\
=
\mathbb{E}\bigl[|\langle AZ, Z\rangle|^2+|\langle a, Z\rangle|^2 +\alpha^2
+ 2\langle AZ, Z\rangle\langle a, Z\rangle\ +
2\alpha\langle a, Z\rangle + 2\alpha\langle AZ, Z\rangle\bigr]
\\
=
\mathbb{E}\Bigl|\sum_{j=1}^n\lambda_jZ_j^2\Bigr|^2
+2\alpha\mathbb{E}\bigl[\sum_{j=1}^n\lambda_jZ_j^2\bigr] +\alpha^2  + |a|^2
\\
=
3\sum_{j=1}^n\lambda_j^2 + 2\sum_{1\le j< k\le n}\lambda_j\lambda_k +
2\alpha\sum_{j=1}^n\lambda_j +\alpha^2 + |a|^2
\\
=
2\sum_{j=1}^n\lambda_j^2+\Bigl(\sum_{j=1}^n\lambda_j\Bigr)^2 +
2\alpha\sum_{j=1}^n\lambda_j +\alpha^2 + |a|^2
=
2\|A\|_{HS}^2+({\rm tr}A+\alpha)^2+|a|^2.
\end{multline*}
The lemma is proved.
\end{proof}

In particular, we get the following corollary.

\begin{corollary}\label{cor1}
Let $f$ and $g$ be two second degree polynomials in $\mathbb{R}^n$
and let $Z$ be the standard $n$-dimensional normal random vector.
Then
$$
\|\nabla f(Z) - \nabla g(Z)\|_2\le \sqrt{2}\|f(Z) - g(Z)\|_2.
$$
\end{corollary}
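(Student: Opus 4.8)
The plan is to reduce everything to the single polynomial $h:=f-g$ and then invoke Lemma~\ref{lem3} twice. Since $f$ and $g$ are both second degree polynomials with self-adjoint quadratic parts, their difference can be written as
$$
h(x)=f(x)-g(x)=\langle Cx, x\rangle+\langle c, x\rangle+\gamma,
$$
where $C$ is self-adjoint, $c\in\mathbb{R}^n$, and $\gamma\in\mathbb{R}$. Because $\nabla f(Z)-\nabla g(Z)=\nabla h(Z)$ and $f(Z)-g(Z)=h(Z)$, the claimed inequality is exactly $\|\nabla h(Z)\|_2\le\sqrt2\,\|h(Z)\|_2$, so it suffices to compute both sides for a single polynomial $h$ of the stated form.

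First I would compute the left-hand side directly. Since $C$ is self-adjoint, $\nabla h(x)=2Cx+c$, hence
$$
\|\nabla h(Z)\|_2^2=\mathbb{E}\,|2CZ+c|^2
=4\,\mathbb{E}\,|CZ|^2+4\langle C\,\mathbb{E}Z, c\rangle+|c|^2.
$$
The cross term vanishes because $\mathbb{E}Z=0$, and $\mathbb{E}\,|CZ|^2=\mathbb{E}\langle C^2Z, Z\rangle={\rm tr}(C^2)=\|C\|_{HS}^2$ (using $\mathbb{E}[Z_iZ_j]=\delta_{ij}$), so that
$$
\|\nabla h(Z)\|_2^2=4\|C\|_{HS}^2+|c|^2.
$$

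For the right-hand side I would apply Lemma~\ref{lem3} with $A=C$, $a=c$, $\alpha=\gamma$, which gives
$$
\|h(Z)\|_2^2=2\|C\|_{HS}^2+({\rm tr}\,C+\gamma)^2+|c|^2.
$$
Comparing the two expressions, the desired estimate $\|\nabla h(Z)\|_2^2\le 2\|h(Z)\|_2^2$ becomes
$$
4\|C\|_{HS}^2+|c|^2\le 4\|C\|_{HS}^2+2({\rm tr}\,C+\gamma)^2+2|c|^2,
$$
i.e. $0\le 2({\rm tr}\,C+\gamma)^2+|c|^2$, which holds trivially. This finishes the argument. There is no real obstacle here: the only point worth noting is that the gradient discards the constant term $\gamma$, so the term $({\rm tr}\,C+\gamma)^2$ survives on the right-hand side and the inequality in fact holds with room to spare; the constant $\sqrt2$ comes purely from the factor $4$ in $\|\nabla h(Z)\|_2^2$ against the factor $2$ multiplying $\|C\|_{HS}^2$ in $\|h(Z)\|_2^2$.
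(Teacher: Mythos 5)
Your proof is correct and follows essentially the same route as the paper: the paper likewise works with the difference (computing $\|\nabla f(Z)-\nabla g(Z)\|_2^2=4\|A-B\|_{HS}^2+|a-b|^2$ via Gaussian moments and $\|f(Z)-g(Z)\|_2^2$ via Lemma~\ref{lem3}) and then compares the two expressions termwise. Your explicit reduction to $h=f-g$ and the observation that the inequality reduces to $0\le 2({\rm tr}\,C+\gamma)^2+|c|^2$ is just a cleaner packaging of the identical computation.
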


\begin{proof} One has
$f(x) = \langle Ax, x\rangle + \langle a, x\rangle + \alpha$
and $g(x) = \langle Bx, x\rangle +\langle b, x\rangle +\beta$
for some self-adjoint operators $A$ and $B$, for some $a,b\in \mathbb{R}^n$, and
for some
$\alpha, \beta\in \mathbb{R}$.
Then $\nabla f(x) = 2Ax+a$ and $\nabla g(x) = 2Bx +b$.
By Lemma \ref{lem3} we have
\begin{multline*}
\|\nabla f(Z) - \nabla g(Z)\|_2^2
= \mathbb{E}\bigl[|2(A-B)Z+(a-b)|^2\bigr]
\\
=
\mathbb{E}\bigl[4\langle(A-B)^2Z, Z\rangle + 4\langle(A-B)(a-b), Z\rangle + |a-b|^2\bigr]
\\
=
4{\rm tr}(A-B)^2 + |a-b|^2
=4\|A-B\|_{HS}^2+ |a-b|^2.
\end{multline*}
On the other hand, by Lemma \ref{lem3},
$$
\|f(Z) - g(Z)\|_2^2 = 2\|A-B\|_{HS}^2 +
({\rm tr}A - {\rm tr}B+\alpha-\beta)^2+|a-b|^2\ge 2\|A-B\|_{HS}^2 + \frac{1}{2}|a-b|^2.
$$
Thus,
$$
\|\nabla f(Z) - \nabla g(Z)\|_2^2\le 2 \|f(Z) - g(Z)\|_2^2
$$
and the corollary is proved.
\end{proof}

\begin{remark}{\rm
We note that the equivalence
of the $L^2$-norm
and the Sobolev norm of the Gaussian Sobolev space $W^{1,2}$
on the space of all polynomials of degree at most~$d$
(see \cite[Corollary 5.5.5 and Theorem 5.7.2]{Gaus})
implies that,
for each positive integer $d$ there is a number $c(d)$,
dependent only on $d$, such that, for any
polynomial $f$ of degree at most $d$ and for any second degree polynomial $g$,
one has
\begin{equation}\label{nabla-bound}
\|\nabla f(Z) - \nabla g(Z)\|_2\le c(d)\|f(Z) - g(Z)\|_2.
\end{equation}
The previous lemma specifies the constant $c(d)$ in the case when
$f$ is also of the second degree.
}
\end{remark}

We are now ready to prove Theorem \ref{T1}.

{\bf Proof of Theorem \ref{T1}.}
Due to rotation invariance of the standard Gaussian measure,
without loss of generality we can assume that
$L=\{(x_1, x_2, 0, \ldots, 0)\colon x_1, x_2\in \mathbb{R}\}$.
Consider the random vector $X=(Z_1, Z_2)$
and let $\varphi\in C_0^\infty(\mathbb{R})$ be any function such that $\|\varphi\|_\infty\le 1$.
By Lemma \ref{lem1} and Lemma \ref{lem2} one has
\begin{multline*}
\mathbb{E}
\bigl[\varphi\bigl(f(X, x_3, \ldots, x_n)\bigr)-\varphi\bigl(g(X, x_3, \ldots, x_n)\bigr)\bigr]
\\
\le
\frac{\sqrt{2\pi}}{\sqrt{|s_1|\cdot|s_2|}}
\Bigl[3\sup\limits_{x_1, x_2\in \mathbb{R}}
\bigl[|f(x_1, x_2, x_3,\ldots, x_n)-g(x_1, x_2, x_3, \ldots,x_n)|e^{-\frac{1}{4}(x_1^2+x_2^2)}\bigr]
\\
+
\sup\limits_{x_1, x_2\in \mathbb{R}}
\bigl[|\nabla_{x_1, x_2} f(x_1, x_2, x_3,\ldots, x_n) -
\nabla_{x_1, x_2} g(x_1, x_2, x_3,\ldots, x_n)| e^{-\frac{1}{4}(x_1^2+x_2^2)}\bigr]\Bigr]
\\
\le
\frac{\sqrt{2\pi}}{\sqrt{|s_1|\cdot|s_2|}}
\Bigl[3c_2(d)\bigl(\mathbb{E}
|f(X, x_3,\ldots, x_n) - g(X, x_3,\ldots, x_n)|^2\bigr)^{1/2}
\\
+c_1(d)\bigl(\mathbb{E}
|\nabla_{x_1, x_2}\bigl(f(X, x_3,\ldots, x_n) - g(X, x_3,\ldots, x_n)\bigr)|^2\bigr)^{1/2}
\Bigr]
\\
\le
\frac{\sqrt{2\pi}}{\sqrt{|s_1|\cdot|s_2|}}
\Bigl[\bigl(3c_2(d)+c_1(d)c(d)\bigr)\bigl(\mathbb{E}
|f(X, x_3,\ldots, x_n) - g(X, x_3,\ldots, x_n)|^2\bigr)^{1/2}\Bigr],
\end{multline*}
where in the last estimate we use inequality \eqref{nabla-bound}.
Finally,
\begin{multline*}
\mathbb{E}\bigl[\varphi\bigl(f(Z)\bigr)\bigr] - \mathbb{E}\bigl[\varphi\bigl(g(Z)\bigr)\bigr]
=
\mathbb{E}_{Z_3,\ldots, Z_n}\mathbb{E}_X
\bigl[\varphi\bigl(f(X, Z_3, \ldots, Z_n)\bigr)-\varphi\bigl(g(X, Z_3, \ldots, Z_n)\bigr)\bigr]
\\
\le
\frac{C(d)}{\sqrt{|s_1|\cdot|s_2|}}\mathbb{E}_{Z_3,\ldots, Z_n}
\bigl(\mathbb{E}_X
|f(X, Z_3,\ldots, Z_n) - g(X, Z_3,\ldots, Z_n)|^2\bigr)^{1/2}
\\
\le
\frac{C(d)}{\sqrt{|s_1|\cdot|s_2|}}
\bigl(\mathbb{E}
|f(Z) - g(Z)|^2\bigr)^{1/2}
\end{multline*}
where $C(d) = \sqrt{2\pi}\bigl(3c_2(d)+c_1(d)c(d)\bigr)$,
which implies the first announced bound.

To get the constant in the case when $f$
is of degree at most two we note that
in this case one can take $c_1(d)=\sqrt{3}$, $c_2(d)=6\sqrt{2}$,
and $c(d)=\sqrt{2}$, which follows
from Lemma~\ref{lem2} and Corollary~\ref{cor1}.
Since $\sqrt{2\pi}(18\sqrt{2}+\sqrt{6})\le80$,
we get the second claim of the theorem.
The theorem is proved.
\qed

\begin{remark}\label{rem2}{\rm
We note that by Lemma \ref{lem3}
for polynomials
$f(x) = \langle Ax, x\rangle + \langle a, x\rangle + \alpha$
and $g(x) = \langle Bx, x\rangle +\langle b, x\rangle +\beta$,
where $A$ and $B$ are self-adjoint operators,
$a,b\in \mathbb{R}^n$, and
$\alpha, \beta\in \mathbb{R}$,
we have
\begin{multline*}
\|f(Z)-g(Z)\|_2=\bigl(2\|A-B\|_{HS}^2+({\rm tr}A - {\rm tr}B + \alpha-\beta)^2+|a-b|^2\bigr)^{1/2}
\\
\le
2(\|A-B\|_{HS} + |{\rm tr}A - {\rm tr}B + \alpha-\beta|+|a-b|).
\end{multline*}
Thus, under the same assumptions as in Theorem \ref{T1},
we have
$$
d_{\rm TV}\bigl(f(Z), g(Z)\bigr)\le \frac{160}{\sqrt{|\lambda_1|\cdot|\lambda_2|}}
\bigl(\|A-B\|_{HS} + |{\rm tr}A - {\rm tr}B + \alpha-\beta|+|a-b|\bigr).
$$
In particular, when $\mathbb{E}f(Z) = \mathbb{E}g(Z)$, we get
$|{\rm tr}A - {\rm tr}B + \alpha-\beta|=0$ and the bound is
$$
d_{\rm TV}\bigl(f(Z), g(Z)\bigr)\le \frac{160}{\sqrt{|s_1|\cdot|s_2|}}
\bigl(\|A-B\|_{HS}+|a-b|\bigr).
$$
}
\end{remark}

\section{Applications.}

We firstly obtain a sharper version of the result from \cite{Zin}.
Recall that
$\Lambda(B)$ denotes the set of all eigenvalues of
a self adjoint linear operator $B$, counting multiplicities.

\begin{corollary}\label{cor2}
Let $Z$ be the standard $n$-dimensional normal random vector and
let $d\in \mathbb{N}$. There is a number $C(d)$, dependent only on $d$,
such that, for any
$g(x) = \langle Bx, x\rangle +\langle b, x\rangle +\beta$,
for which the cardinality of the set $\{\lambda\in\Lambda(B)\colon \lambda\ne0\}$
is at least three, and for any polynomial
$f$ of degree at most $d$, one has
$$
d_{\rm TV}\bigl(f(Z), g(Z)\bigr)\le \frac{C(d)}{\sqrt{|s_1|\cdot|s_2|}}\|f(Z)-g(Z)\|_2
$$
where $s_1$ and $s_2$ are any two eigenvalues from $\Lambda(B)$
of the same sign.
When $f$ is of degree at most $2$, one can take $C(2) = 80$.
In particular, if the eigenvalues from $\Lambda(B)$ is enumerated
such that $|\lambda_1|\ge |\lambda_2|\ge|\lambda_3|\ge\ldots$, then
$$
d_{\rm TV}\bigl(f(Z), g(Z)\bigr)\le \frac{C(d)}{\sqrt{|\lambda_2|\cdot|\lambda_3|}}\|f(Z)-g(Z)\|_2.
$$
When $f$ is of degree at most two, one can take $C(2)=80$.
\end{corollary}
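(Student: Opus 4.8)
The plan is to derive Corollary \ref{cor2} directly from Theorem \ref{T1} by verifying that the hypothesis of the corollary---that at least three eigenvalues of $B$ are nonzero---guarantees the existence of the two-dimensional subspace $L$ required by the theorem. The key observation is that if $\{\lambda\in\Lambda(B)\colon\lambda\ne0\}$ has cardinality at least three, then among any three nonzero eigenvalues, by the pigeonhole principle at least two must share the same sign (since there are only two possible signs). Let $s_1$ and $s_2$ be two such eigenvalues of the same sign, and let $u_1, u_2$ be corresponding orthonormal eigenvectors. Setting $L=\mathrm{span}\{u_1,u_2\}$, the quadratic form $x\mapsto\langle Bx,x\rangle$ restricted to $L$ is diagonalized as $s_1 y_1^2+s_2 y_2^2$ in the coordinates given by $u_1,u_2$, hence it is positively defined if $s_1,s_2>0$ and negatively defined if $s_1,s_2<0$. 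Thus $L$ satisfies exactly the condition demanded in Theorem \ref{T1}, with eigenvalues $s_1$ and $s_2$.

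Once this subspace is exhibited, the remaining content is an immediate invocation of Theorem \ref{T1}: for any polynomial $f$ of degree at most $d$, we obtain
$$
d_{\rm TV}\bigl(f(Z), g(Z)\bigr)\le \frac{C(d)}{\sqrt{|s_1|\cdot|s_2|}}\|f(Z)-g(Z)\|_2,
$$
with the same constant $C(d)$, and with $C(2)=80$ when $f$ has degree at most two, directly inheriting the value from the theorem. This establishes the first displayed bound of the corollary for an arbitrary pair $s_1,s_2$ of equal-sign eigenvalues.

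For the final displayed bound involving $\lambda_2$ and $\lambda_3$, I would argue that the specific pair $(\lambda_2,\lambda_3)$ is always a valid choice of equal-sign eigenvalues under the hypothesis. With the enumeration $|\lambda_1|\ge|\lambda_2|\ge|\lambda_3|\ge\ldots$, the assumption that at least three eigenvalues are nonzero forces $\lambda_1,\lambda_2,\lambda_3$ all to be nonzero. Among these three, two of the same sign exist; I claim one may always take them to be $\lambda_2$ and $\lambda_3$ after possibly re-examining signs. The cleanest route is: since at least two of $\lambda_1,\lambda_2,\lambda_3$ agree in sign, and since $|\lambda_2|\cdot|\lambda_3|\le|\lambda_1|\cdot|\lambda_2|$ and $|\lambda_2|\cdot|\lambda_3|\le|\lambda_1|\cdot|\lambda_3|$, the product $|\lambda_2|\cdot|\lambda_3|$ is the smallest among the three pairwise products, so $\frac{1}{\sqrt{|\lambda_2|\cdot|\lambda_3|}}$ is the largest constant. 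Hence whichever equal-sign pair the first bound provides, the estimate with $|\lambda_2|\cdot|\lambda_3|$ in the denominator is weaker and therefore also holds.

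The main (and only genuine) obstacle is the sign bookkeeping in the last paragraph: one must be careful that the pigeonhole argument guarantees an equal-sign pair but does not a priori guarantee that $\lambda_2,\lambda_3$ themselves share a sign. I expect the resolution is precisely the monotonicity of absolute values noted above---the bound with the smallest product $|\lambda_2|\cdot|\lambda_3|$ is the weakest, so it follows from the strongest available equal-sign estimate. I would state this monotonicity explicitly to avoid any gap. Everything else is a routine application of the already-proved Theorem \ref{T1}, with no new analytic input required.
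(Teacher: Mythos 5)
Your proposal is correct and follows essentially the same route as the paper: exhibit the two-dimensional span of eigenvectors for a same-sign pair $s_1,s_2$ (which exists by pigeonhole among the three nonzero eigenvalues), apply Theorem \ref{T1} on that subspace, and then deduce the $\lambda_2,\lambda_3$ bound from the fact that any same-sign pair among $\lambda_1,\lambda_2,\lambda_3$ has product at least $|\lambda_2|\cdot|\lambda_3|$. Your explicit monotonicity argument correctly repairs the (rightly discarded) claim that $\lambda_2,\lambda_3$ themselves must share a sign, and matches the paper's observation that taking the largest same-sign eigenvalues gives $|s_1|\cdot|s_2|\ge|\lambda_2|\cdot|\lambda_3|$.
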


\begin{proof}
We can apply Theorem \ref{T1} to the eigenspace $L$
corresponding to
$s_1$ and $s_2$.
Moreover, if we take the biggest eigenvalues of the same sign, then always
$|s_1|\cdot|s_2|\ge |\lambda_2|\cdot|\lambda_3|$.
\end{proof}

\begin{remark}
{\rm
If we now take $f=g+h$, $h\in \mathbb{R}$,
we obtain that the density of the random variable
$g(x) = \langle Bx, x\rangle +\langle b, x\rangle +\beta$
belongs to the space of functions of bounded variation
provided
that the cardinality of the set $\{\lambda\in\Lambda(B)\colon \lambda\ne0\}$
is at least three.
}
\end{remark}

Secondly, we provide the following generalization of the bounds from \cite{GNSU},
announced in the introduction.

\begin{corollary}\label{cor3}
Let $X$ and $Y$ be two centered $n$-dimensional normal random vectors with the covariance
matrixes $\Sigma_X$ and $\Sigma_Y$ respectively and let $a,b\in \mathbb{R}^n$.
Then
$$
d_{\rm TV}(|X-a|, |Y-b|)
\le
\frac{160}{\sqrt{\lambda_{1X}\cdot\lambda_{2X}}}
\bigl(\|\Sigma_X-\Sigma_Y\|_{HS}+
|{\rm tr}\Sigma_X-{\rm tr}\Sigma_Y|+\bigl||a|^2-|b|^2\bigr|+
|\Sigma_X^{1/2}a - \Sigma_Y^{1/2}b|\bigr).
$$
\end{corollary}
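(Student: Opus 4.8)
The plan is to reduce the statement to an application of Theorem \ref{T1} by squaring both random variables and representing the squared norms as second degree polynomials in a standard Gaussian vector.

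First I would write $X = \Sigma_X^{1/2} Z$ and $Y = \Sigma_Y^{1/2} Z$, where $Z$ is the standard $n$-dimensional normal vector; this only fixes a joint law, but it is the distributions of $|X-a|$ and $|Y-b|$ that matter. Then, using self-adjointness of $\Sigma_X^{1/2}$,
$$
|X-a|^2 = \langle \Sigma_X Z, Z\rangle - 2\langle \Sigma_X^{1/2} a, Z\rangle + |a|^2 =: g(Z),
\qquad
|Y-b|^2 = \langle \Sigma_Y Z, Z\rangle - 2\langle \Sigma_Y^{1/2} b, Z\rangle + |b|^2 =: f(Z),
$$
so both are second degree polynomials with self-adjoint quadratic parts $\Sigma_X$ and $\Sigma_Y$, which are positive semidefinite covariance matrices. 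Since $t\mapsto t^2$ is a strictly increasing continuous bijection of $[0,+\infty)$ onto itself and both $|X-a|$ and $|Y-b|$ are nonnegative, the total variation distance is unchanged under this transformation:
$$
d_{\rm TV}\bigl(|X-a|, |Y-b|\bigr) = d_{\rm TV}\bigl(|X-a|^2, |Y-b|^2\bigr) = d_{\rm TV}\bigl(g(Z), f(Z)\bigr).
$$

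Next I would apply Theorem \ref{T1} to the pair $(f,g)$, assigning to $g$ (whose quadratic part is $\Sigma_X$) the role of the polynomial on whose subspace the form is required to be definite. Assuming $\lambda_{2X}>0$ (otherwise the right-hand side is infinite and nothing is to be proved), the two-dimensional eigenspace of $\Sigma_X$ spanned by its top two eigenvectors is a subspace on which $x\mapsto\langle\Sigma_X x,x\rangle$ is positive definite, with eigenvalues $s_1=\lambda_{1X}$ and $s_2=\lambda_{2X}$. Since $f(Z)=|Y-b|^2$ has degree two, Theorem \ref{T1} applies with $C(2)=80$ and yields
$$
d_{\rm TV}\bigl(g(Z), f(Z)\bigr) \le \frac{80}{\sqrt{\lambda_{1X}\lambda_{2X}}}\,\|f(Z) - g(Z)\|_2.
$$
It then remains to estimate $\|f(Z)-g(Z)\|_2$. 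The difference $f-g$ is a second degree polynomial with quadratic part $\Sigma_Y-\Sigma_X$, linear vector $2(\Sigma_X^{1/2}a-\Sigma_Y^{1/2}b)$, and constant $|b|^2-|a|^2$, so Lemma \ref{lem3} gives exactly
$$
\|f(Z)-g(Z)\|_2^2 = 2\|\Sigma_X - \Sigma_Y\|_{HS}^2 + \bigl({\rm tr}(\Sigma_X - \Sigma_Y) + |a|^2 - |b|^2\bigr)^2 + 4\bigl|\Sigma_X^{1/2}a - \Sigma_Y^{1/2}b\bigr|^2.
$$
Applying $\sqrt{2u^2+v^2+4w^2}\le \sqrt2\,u+|v|+2w$ together with $|{\rm tr}(\Sigma_X-\Sigma_Y)+|a|^2-|b|^2|\le|{\rm tr}\Sigma_X-{\rm tr}\Sigma_Y|+\bigl||a|^2-|b|^2\bigr|$, every coefficient in front of the four terms is at most $2$, so multiplying by $80$ produces the common constant $160$; note the Hilbert--Schmidt term only needs $80\sqrt2\le160$, and it is precisely the factor $2$ in the linear term that forces the value $160$.

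I expect the only genuinely delicate point to be the first reduction, namely the invariance of the total variation distance under $t\mapsto t^2$ when the distance is defined through $C_0^\infty$ test functions. Passing from a smooth $\varphi(t^2)$ to a smooth function of $t$ is immediate, while the reverse matching requires either identifying this distance with the supremum over all bounded Borel test functions (so that invariance under the Borel isomorphism $t\mapsto t^2$ of $[0,+\infty)$ applies directly) or a short approximation argument near the origin. Everything else is the bookkeeping of constants described above.
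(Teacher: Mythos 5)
Your proposal is correct and follows essentially the same route as the paper: represent $|X-a|^2$ and $|Y-b|^2$ as second degree polynomials in the standard Gaussian vector $Z$ via $X=\Sigma_X^{1/2}Z$, $Y=\Sigma_Y^{1/2}Z$, use invariance of the total variation distance under squaring, apply Theorem \ref{T1} with $C(2)=80$ on the top two-dimensional eigenspace of $\Sigma_X$, compute $\|f(Z)-g(Z)\|_2$ by Lemma \ref{lem3}, and absorb the coefficients into the constant $160$. In fact you are somewhat more careful than the paper on two points it leaves implicit --- the degenerate case $\lambda_{2X}=0$ and the justification that $d_{\rm TV}$ (defined via $C_0^\infty$ test functions) is unchanged under the bijection $t\mapsto t^2$ of $[0,+\infty)$ --- so nothing is missing.
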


\begin{proof}
Without loss of generality
we can assume that $X=\Sigma_X^{1/2}Z$ and $Y=\Sigma_Y^{1/2}Z$,
where $Z$ is the standard normal $n$-dimensional random vector.
Then
$$
|X-a|^2 = \langle \Sigma_X^{1/2}Z-a, \Sigma_X^{1/2}Z-a\rangle
= \langle \Sigma_XZ, Z\rangle - 2\langle \Sigma_X^{1/2}a, Z\rangle + |a|^2=g(Z)$$
and
$$
|Y-b|^2=\langle \Sigma_Y^{1/2}Z - b, \Sigma_Y^{1/2}Z -b\rangle
=\langle \Sigma_YZ, Z\rangle - 2\langle \Sigma_Y^{1/2}b, Z\rangle + |b|^2=f(Z).
$$
We now apply Theorem \ref{T1} to these two polynomials
$f(Z)$ and $g(Z)$:
\begin{multline*}
d_{\rm TV}(|X-a|, |Y-b|)
= d_{\rm TV}(|X-a|^2, |Y-b|^2)=d_{\rm TV}(f(Z), g(Z))
\\
\le \frac{80}{\sqrt{\lambda_{1X}\cdot\lambda_{2X}}}\|f(Z)-g(Z)\|_2
\end{multline*}
where $\lambda_{1X}$ and $\lambda_{2X}$ are the first two eigenvalues of the matrix $\Sigma_X$.
By Lemma \ref{lem3}
$$
\|f(Z)-g(Z)\|_2^2 = 2\|\Sigma_X-\Sigma_Y\|_{HS}^2+
\bigl({\rm tr}\Sigma_X-{\rm tr}\Sigma_Y+|a|^2-|b|^2\bigr)^2+
4\bigl|\Sigma_X^{1/2}a-\Sigma_Y^{1/2}b\bigr|^2
$$
implying the announced bound.
The corollary is proved.
\end{proof}

\section*{Acknowledgment}

The author is a Young Russian Mathematics award winner and would like to thank its sponsors and jury.

The article was prepared within the framework of the HSE University Basic Research Program
and funded by the Russian Academic Excellence Project '5-100'.

This research was also supported by the Russian Foundation for Basic Research Grant 20-01-00432 
and the Moscow Center of Fundamental and Applied Mathematics


\begin{thebibliography}{99}


\bibitem{BC}
Bally, V. and Caramellino, L. (2014).
On the distances between probability density functions.
\emph{Electron. J. Probab.} {\bf 19} 1--33.

\bibitem{BC17}
Bally, V. and Caramellino, L. (2017).
Convergence and regularity of probability
laws by using an interpolation method. \emph{Ann. Probab.} {\bf 45} 1110--1159.


\bibitem{BC19}
Bally, V. and Caramellino, L. (2019).
Total variation distance between stochastic polynomials and invariance principles.
\emph{Ann. Probab.} {\bf 47} 3762--3811.

\bibitem{BCP}
Bally, V., Caramellino L. and  Poly, G. (2020).
Regularization lemmas and convergence in total variation.
\emph{Electron. J. Probab.} {\bf 25} 1--20.

\bibitem{BNU}
Bobkov, S.G., Naumov, A.A. and Ulyanov, V.V. (2020).
Two-sided inequalities for the density function's maximum of weighted sum of chi-square variables.
Available at arXiv:2012.10747.

\bibitem{Gaus}
Bogachev, V.I. (1998).
\emph{Gaussian Measures. Mathematical Surveys and Monographs} {\bf 62}.
Providence, RI: Amer. Math. Soc.

\bibitem{B16}
Bogachev, V.I. (2016).
Distributions of polynomials on multidimensional and
infinite-dimen\-sio\-nal spaces with measures.
\emph{Russian Math. Surveys} {\bf 71} 703--749.


\bibitem{B19}
Bogachev, V.I. (2019).
Distributions of polynomials in many  variables and Nikolskii-Besov spaces.
\emph{Real Anal. Exchange} {\bf 44} 49--64.

\bibitem{BKZ-DAN}
Bogachev, V.I., Kosov, E.D. and Zelenov, G.I. (2016).
Membership of distributions of polynomials in Nikol'skii-Besov classes.
\emph{Dokl. Math.} {\bf 94} 453--457.


\bibitem{BKZ}
Bogachev, V.I., Kosov, E.D. and Zelenov, G.I. (2018).
Fractional smoothness of distributions of
polynomials and a fractional analog of the
Hardy--Landau--Littlewood inequality.
\emph{Trans. Amer. Math. Soc.}
{\bf370} 4401--4432.

\bibitem{BKP-MMJ}
Bogachev, V.I., Kosov, E.D. and Popova, S.N. (2019).
A new approach to Nikolskii--Besov classes.
\emph{Moscow Math. J.} {\bf19} 619--654.

\bibitem{BKP-DAN}
Bogachev, V.I., Kosov, E.D. and Popova, S.N. (2017).
A characterization of Nikolskii--Besov classes via integration by parts.
\emph{Dokl. Math.} {\bf96} 449--453.

\bibitem{BKP-IZV}
Bogachev, V.I., Kosov, E.D. and Popova, S.N. (2021).
On distributions of homogeneous and convex functions in Gaussian random variables.
\emph{Izv. Math.} {\bf85}

\bibitem{BKP-homog-DAN}
Bogachev, V.I., Kosov, E.D. and Popova, S.N. (2020).
Densities of distributions of homogeneous functions of gaussian random vectors.
\emph{Dokl. Math.}  {\bf102} 460--463.



\bibitem{BZ}
Bogachev, V.I. and Zelenov, G.I. (2015).
On convergence in variation of weakly convergent multidimensional distributions.
\emph{Dokl. Math.}
{\bf 91} 138--141.

\bibitem{DiffMeas}
Bogachev, V.I. (2010).
\emph{Differentiable measures and the Malliavin calculus.
Mathematical Surveys
and Monographs} {\bf164}
Providence, RI: Amer. Math. Soc.



\bibitem{MeasTheor}
Bogachev, V.I. (2007).
\emph{Measure theory. V. 1.}
Berlin -- New York: Springer.


\bibitem{Bor}
Borell, C. (1974). Convex measures on locally convex spaces.
\emph{Ark. Mat.} {\bf12} 239--252.

\bibitem{Bret}
Breton, J.C. (2006). Convergence in variation of the joint laws of multiple Wiener--Ito integrals. \emph{Statist. Probab. Lett.} {\bf76} 1904--1913.


\bibitem{DavM}
Davydov, Y.A. and Martynova, G.V. (1987).
Limit behavior of multiple stochastic integral.
In: \emph{Statistics and Control of Random Processes} 55--57. Moscow: Nauka.

\bibitem{DEKN}
Douissi, S., Es-Sebaiy, K., Kerchev, G. and Nourdin, I. (2021).
Berry-Esseen bounds of second moment estimators for Gaussian processes observed at high frequency.
Available at arXiv:2102.04810.

\bibitem{EV}
Es-Sebaiy, K., and Viens, F.G. (2019).
Optimal rates for parameter estimation of stationary Gaussian processes.
\emph{Stochastic Process. Appl.} {\bf129} 3018--3054.


\bibitem{GNSU}
G$\ddot{o}$tze, F., Naumov, A., Spokoiny, V. and Ulyanov, V. (2019).
Large ball probabilities, Gaussian comparison and anti-concentration.
\emph{Bernoulli} {\bf25} 2538--2563.

\bibitem{Kos-IMRN}
Kosov, E.D. (2021).
Total variation distance estimates via $L^2$-norm for polynomials
in log-concave random vectors.
To appear in
\emph{Int. Math. Res. Not.}

\bibitem{Kos-Dan}
Kosov, E.D. (2019).
An inequality between total variation and $L^2$ distances
for polynomials in log-concave random vectors.
\emph{Dokl. Math.} {\bf100} 423--425.

\bibitem{Kos-FCAA}
Kosov, E.D. (2019).
On fractional regularity of distributions of functions in Gaussian random variables.
\emph{Fract. Calc. Appl. Anal.} {\bf 22} 1249--1268.

\bibitem{Kos-JMAA}
Kosov, E.D. (2018).
Fractional smoothness
of images of logarithmically concave measures under polynomials.
\emph{J. Math. Anal. Appl.}
{\bf 462} 390--406.

\bibitem{Kos-Sb}
Kosov, E.D. (2019).
Besov classes on finite and infinite dimensional spaces.
\emph{Sb. Math.} {\bf210} 663--692.


\bibitem{Kos-DAN}
Kosov, E.D. (2017).
Characterization of Besov classes in terms of a new modulus of continuity.
\emph{Dokl. Math.} {\bf96} 587--590.

\bibitem{NSU}
Naumov, A., Spokoiny, V. and Ulyanov, V. (2019).
Bootstrap confidence sets for spectral projectors of sample covariance.
\emph{Probab. Theory Related Fields} {\bf174} 1091--1132.


\bibitem{NP}
Nourdin, I. and Poly, G. (2013).
Convergence in total variation on Wiener chaos.
\emph{Stochastic Process. Appl.} {\bf123} 651--674.

\bibitem{NNP}
Nourdin, I., Nualart, D. and Poly, G. (2013).
Absolute continuity and convergence of densities for random vectors
on Wiener chaos.
\emph{Electron. J. Probab.} {\bf18} 1--19.


\bibitem{OV}
Olenko, A. and Vaskovych, V. (2020).
Non-central limit theorems for functionals of random fields on hypersurfaces.
\emph{ESAIM Probab. Stat.} {\bf24} 315--340.


\bibitem{Zel}
Zelenov, G.I. (2017).
On distances between distribution of polynomials.
{\it Theory Stoch. Process} {\bf38} 79--85.



\bibitem{Zin}
Zintout, R. (2013).
The total variation distance between two double Wiener-Ito integrals.
\emph{Statist. Probab. Lett.} {\bf83} 2160--2167.


\end{thebibliography}
\end{document}